\newcommand{\LYYY}[5][{}]{\ensuremath{
 \xygraph{
!{<0pt,0pt>;<4pt,0pt>:<0pt,-4pt>::}
!{(1,4)}*{\scriptscriptstyle #1}
!{(1,3)}="a"
!{(1,1)}="b"
!{(-1,-1)}="c"
!{(3,-1)}="d"
!{(3,-2)}*{\scriptscriptstyle #5}
!{(-3,-3)}="e"
!{(1,-4)}*{\scriptscriptstyle #4}
!{(1,-3)}="f"
!{(-5,-5)}="g"
!{(-1,-5)}="h"
!{(-5,-6)}*{\scriptscriptstyle #2}
!{(-1,-6)}*{\scriptscriptstyle #3}
"b"-"c"
"b"-"d"
"c"-"e"
"c"-"f"
"e"-"g"
"e"-"h"
}
}}
\newcommand{\RYYLY}[5][{}]{\ensuremath{
 \xygraph{
!{<0pt,0pt>;<4pt,0pt>:<0pt,-4pt>::}
!{(1,4)}*{\scriptscriptstyle #1}
!{(1,3)}="a"
!{(1,1)}="b"
!{(-1,-1)}="c"
!{(3,-1)}="d"
!{(5,-3)}="e"
!{(1,-3)}="f"
!{(-1,-5)}="g"
!{(3,-5)}="h"
!{(-1,-2)}*{\scriptscriptstyle #2}
!{(-1,-6)}*{\scriptscriptstyle #3}
!{(3,-6)}*{\scriptscriptstyle #4}
!{(5,-4)}*{\scriptscriptstyle #5}
"b"-"c"
"b"-"d"
"d"-"e"
"d"-"f"
"f"-"g"
"f"-"h"
}
}}
\newtheorem{lemma}{Lemma}[section]
\newtheorem{cor}[lemma]{Corollary}
\newtheorem{thm}[lemma]{Theorem}
\def\sg{\sigma}
\def\S{\mathfrak{S}}
\DeclareMathOperator{\red}{red}
\DeclareMathOperator{\bl}{bl}
\DeclareMathOperator{\sh}{sh}
\def\LT{\mathcal{LT}}
\def\Q{\mathcal{Q}}
\newcommand{\egf}[3]{#1^{#2, #3}}
\newcommand{\coeff}[4]{#1^{#2, #3}_{#4}}
\newcommand{\egfk}[4]{#1^{#2, #3}_{#4}}
\newcommand{\av}[3]{#1_{#2}(#3)}
\newcommand{\seq}[1]{\mathbf{#1}}
\newcommand{\shift}[1]{\sh(\seq{#1})}
\newcommand{\vseq}[1]{\mathbf{\vec{#1}}}
\newcommand{\kshift}[2]{\sh_{#1}(\vseq{#2})}
\title{Block Patterns in Stirling Permutations}
\author{Jeffrey B. Remmel, Andrew Timothy Wilson}
\address{Department of Mathematics \newline \indent
University of California, San Diego \newline \indent
La Jolla, CA, 92093-0112, USA}
\email{jremmel@math.ucsd.edu, atwilson@math.ucsd.edu}
\thanks{The second author is partially supported by the Department of Defense (DoD) through the National Defense Science \& Engineering Graduate Fellowship (NDSEG) Program.}
\begin{document}
\begin{abstract}
We introduce and study a new notion of patterns in Stirling and $k$-Stirling permutations, which we call block patterns. We prove a general result which allows us to compute generating functions for the occurrences of various block patterns in terms of generating functions for the occurrences of patterns in permutations. This result yields a number of applications involving, among other things, Wilf equivalence of block patterns and a new interpretation of Bessel polynomials. We also show how to interpret our results for a certain class of labeled trees, which are in bijection with Stirling permutations. 
\end{abstract}

\keywords{Stirling permutations; permutation patterns; blocks; exponential generating functions; Bessel polynomials.}

\maketitle


%
%
%
%

\section{Introduction}
\label{sec:intro}

The set $\Q_{n}$ of \emph{Stirling permutations of order $n$} is the collection of all permutations of the multiset $\{1^2,2^2, \ldots, n^2\}$ such that every element between the two occurrences of $i$ is greater than $i$ for each $i \in \{1, 2, \ldots, n\}$. Stirling permutations were initially defined in \cite{stirling}.  Recently, there has been 
considerable work on Stirling permutations and their generalizations in, among others, \cite{B,J,generalized,blocks,HV}. 
These papers studied the distributions of various types of patterns in Stirling permutations and their generalizations. Moreover, there are natural bijections between Stirling 
permutations and various families of trees, as mentioned in \cite{J,generalized,dotsenko}. Thus, studying patterns in Stirling permutations is equivalent 
to studying patterns in these trees. We outline one of these bijections in Section \ref{sec:trees}. 

Each of the papers mentioned above also developed methods to deal with a common generalization of Stirling permutations known as $k$-Stirling permutations. The set $\Q_{n, k}$ of  \emph{$k$-Stirling permutations of order $n$} is the set of all rearrangements of the multiset $\{1^{k}, 2^{k}, \ldots, n^{k} \}$ such that every element between two consecutive occurrences of $i$ is greater than $i$ for each $i \in \{1, 2, \ldots, n\}$. Thus $\Q_{n,2} = \Q_n$. 

The main goal of this paper is to study a new type of pattern among 
blocks in Stirling permutations and $k$-Stirling permutations. We will deal only with Stirling permutations for now, leaving $k$-Stirling permutations for Section \ref{sec:k}. To begin, we must define blocks in Stirling permutations, essentially following \\ \cite{blocks}.
For $\sg \in \Q_{n}$, we  let $(i,i)_{\sg}$ denote 
the consecutive string of elements of $\sg$ between the two 
occurrences of $i$ in  $\sg$ and we let $[i,i]_{\sg}$ denote 
the consecutive string of elements of $\sg$ between and including the two 
occurrences of $i$ in $\sg$. We  shall write $[i,i]_{\sg} \subset[j,j]_\sg$ if $i \neq j$ and 
$[i,i]_\sg$ is a consecutive substring of $[j,j]_\sg$. 

For any word $w$ over the alphabet of positive integers $\mathbb{P}$, we say that the \emph{reduced form} of $w$, written $\red(w)$, is equal to the word obtained by replacing each of the occurrences of the $i$th smallest number in $w$ with the number $i$. Two words with the same reduced form are said to be \emph{order isomorphic}.

We say that 
$[i,i]_\sg$ is a level 1 block of $\sg$ if there is no 
$j$ such that $[i,i]_{\sg} \subset [j,j]_\sg$. For $\ell \geq 2$, 
we define the level $\ell$ blocks of $\sg$ inductively by saying 
that $[i,i]_\sg$ is a level $\ell$ block if there is a level 
$\ell-1$ block $[j,j]_\sg$ of $\sg$ such that $[i,i]_\sg \subset [j,j]_\sg$ 
and the reduced form $\red([i,i]_\sg)$ is a level 1 block in $\red((j,j)_\sg)$. If $[i,i]_{\sg} \subset [j,j]_\sg$, $[i,i]_{\sg}$ is a level $\ell$ block, 
and $[j,j]_{\sg}$ is a level $\ell-1$ block, then we will say that 
$[j, j]_\sg$ the \emph{parent} of the block $[i,i]_{\sg}$.

For example, if $ \sg = 4415778852213663$, then $[1,1]_\sg = 1577885221$ and
$(1,1)_\sg = 57788522$.
The level 1 blocks of $\sg$ are $[4,4]_\sg$, $[1,1]_\sg$,  and $[3,3]_\sg$, 
the level 2 blocks of $\sg$ are $[5,5]_\sg$, $[2,2]_\sg$, 
and $[6,6]_\sg$, and the level 3 blocks are $[7,7]_\sg$ and $[8,8]_\sg$.

We say that blocks $[i,i]_\sg$ and $[j,j]_\sg$ are {\em siblings} 
if either they are both level 1 blocks or they share the same parent. Returning to the example \\  $ \sg = 4415778852213663$, the only level 
2 blocks which are siblings are $[5,5]_\sg$ and $[2,2]_\sg$. 
Finally, the maximum level of any block in a Stirling permutation is the \emph{height} of that Stirling permutation. 

We will consider a \emph{permutation pattern} to be a permutation, i.e.\ an element of the symmetric group $\S_m$ for some $m \geq 1$, that may have some of its consecutive elements underlined\footnote{This is not the most general definition of a permutation pattern, but it will suit our purposes. For more general definitions, see \cite{kitaev}.}. A permutation pattern $p$ of length $m$ is said to \emph{occur} in a word $w$ of length $n$ if there exist $1 \leq i_{1} < \ldots < i_{m} \leq n$ such that 
\begin{itemize}
\item $\red(w_{i_{1}} w_{i_{2}} \ldots w_{i_{m}})$ is equal to the permutation obtained by removing the underlines from $p$, and 
\item if $p_{j}$ and $p_{j+1}$ are connected by an underline then $i_{j+1} = i_{j}+1$. 
\end{itemize}
In other words, the underlines insist that certain entries are consecutive in $w$.

Often, the word $w$ is also a permutation. For example, the pattern $2\underline{31}$ occurs twice in the permutation $7253146$, by taking the the subsequences consisting of 253 and 231, respectively. The subsequence 254 does not form an occurrence of $2\underline{31}$ because, although these entries are order isomorphic to $231$, the 5 and 4 do not appear consecutively in the permutation.

When $p$ has no underlines, it is known as a \emph{classical} pattern. When every element of $p$ is underlined, it is a \emph{consecutive} pattern. Following \cite{kitaev}, we let $p(\pi)$ denote the number of occurrences of the pattern $p$ in the permutation $\pi$.

If $p$ is a permutation pattern, we let 
$\av{\S}{n}{p} = \{\pi \in \S_n : p(\pi) = 0 \}$ denote the set 
of permutations of $\S_n$ which avoid $p$. Similarly, if 
$A$ is a set of permutation patterns, we let 
$\av{\S}{n}{A} = \{\pi \in \S_n : p(\pi) = 0 \ \forall p \in A\}$ denote 
the set of permutations of $\S_n$ which avoid all the patterns in 
$A$. We say that two patterns $p$ and $q$ are said to be \emph{Wilf equivalent} if $|\av{\S}{n}{p}| = |\av{\S}{n}{q}|$ for all $n \geq 1$. More generally, 
we say that two sets of patterns $P$ and $Q$ are said to be Wilf equivalent if $|\av{\S}{n}{P}| = |\av{\S}{n}{Q}|$ for all $n \geq 1$. 

If $A$ is a set of patterns we wish to avoid and $p$ is a pattern whose occurrences we want to count, then we shall consider the generating function 
\begin{align*}
\egf{F}{A}{p}(t, z) &= 1 + \sum_{n \geq 1} \frac{t^{n}}{n!} 
\coeff{f}{A}{p}{n}(z) 
\end{align*}
where
\begin{align*}
\coeff{f}{A}{p}{n}(z) &= \sum_{\pi \in \av{\S}{n}{A}} z^{p(\pi)}.
\end{align*}

We now have the terminology necessary to define block patterns. Given a permutation pattern $p$ of length $m$, we say $p$ occurs as a \emph{block pattern} in $\sg \in \Q_n$ if there exist blocks $[b_1, b_1]_{\sg}, \ldots, [b_m, b_m]_{\sg}$ appearing from left to right  in $\sg$ such that
\begin{itemize}
\item all the blocks $[b_1, b_1]_{\sg}, \ldots, [b_m, b_m]_{\sg}$ are siblings, 
\item $\red(b_1 b_2 \ldots b_m) = p$ when considering $p$ as a permutation, and
\item if $p_j$ and $p_{j+1}$ are connected by an underline then the second occurrence of the number $b_j$ in $\sg$ and the first occurrence of $b_{j+1}$ in $\sg$ are consecutive. 
\end{itemize}
Since the blocks are all siblings, they must all be of the same level in $\sg$. We say that this level is the \emph{level} of the occurrence of $p$ as a block pattern in $\sg$. We will write the number of occurrences of the pattern $p$ in $\sg \in \Q_n$ at level $\ell$ as $p^{(\ell)}(\sg)$ and the number of total occurrences of $p$ in $\sg$ as $p(\sg)$. 

For example, we consider the Stirling permutation $\sg = 4415778852213663$ and the pattern $p = 21$. We have 
\begin{itemize}
\item $p^{(1)}(\sg) = 2$ (with $b_1=4$, $b_2=1$ and $b_1=4$, $b_2=3$), and
\item $p^{(2)}(\sg) = 1$ (with $b_1=5$, $b_2=2$).
\end{itemize}
The entries 72 do not form an occurrence of 21 because the $7$ block is contained in the $5$ block, but the $2$ block is not, so the two blocks are not siblings.  If we instead consider the descent pattern $d = \underline{21}$, we get
\begin{itemize}
\item $d^{(1)}(\sg) = 1$ (with $b_1 = 4$, $b_2 = 1$), and
\item $d^{(2)}(\sg) = 1$ (with $b_1 = 5$, $b_2=2$).
\end{itemize}

We are now ready to define our main generating function. We will use boldface to indicate sequences, i.e.\ maps into the set of positive integers $\mathbb{P}$. Let $\seq{A} =(A_1,A_2, \ldots )$ be a sequence of sets of patterns and $\seq{p}=(p_1,p_2, \ldots )$ be a sequence of patterns.  We set 
\begin{align*}
\av{\Q}{n}{\seq{A}} &= \{\sg \in \Q_{n}: a^{(i)}(\sg) = 0 \text{ for all } a \in A_{i} \text{ and for any } i \in \mathbb{P} \}.
\end{align*}
Thus $\av{\Q}{n}{\seq{A}}$ is the set of Stirling permutations that 
avoid all the patterns in $A_i$ at level $i$ for all $i \geq 1$. 
Our main object of interest in this paper is 
the exponential generating function
\begin{align*}
\egf{G}{\seq{A}}{\seq{p}}(t; \seq{x}; \seq{y}) &= 1 + \sum_{n \geq 1} \frac{t^{n}}{n!} \coeff{g}{\seq{A}}{\seq{p}}{n}(\seq{x}; \seq{y})
\end{align*}
where 
\begin{align*}
\coeff{g}{\seq{A}}{\seq{p}}{n}(\seq{x}; \seq{y}) &= \sum_{\sg \in \av{\Q}{n}{\seq{A}}}  \prod_{i \geq 1} x_{i}^{p_i^{(i)}(\sg)} y_{i}^{\bl^{(i)}(\sg)}.
\end{align*}
Here for any $\sg \in Q_n$, $\bl^{(i)}(\sg)$ is the number of level $i$ blocks in $\sg$. Thus the generating function $\egf{G}{\seq{A}}{\seq{p}}(t; \seq{x}; \seq{y})$ keeps track of the number of occurrences of $p_i$ in 
the $i$th level of the permutations in $\av{\Q}{n}{\seq{A}}$. 

Given any sequence $\seq{s} =(s_1,s_2,s_3, \ldots)$, we let 
$\shift{s} = (s_2,s_3, \ldots )$. Thus $\shift{s}$ just 
removes the first element from the sequence. Our main theorem describes how to compute $\egf{G}{\seq{A}}{\seq{p}}$ if we already have $\egf{G}{\shift{A}}{\shift{p}}$ as well as the generating function $\egf{F}{A_1}{p_1}$. 

\begin{thm}
\label{thm:main}
\begin{align*}
\egf{G}{\seq{A}}{\seq{p}}(t; \seq{x}; \seq{y}) &= \egf{F}{A_1}{p_1} \left( y_1 \int_{0}^{t} \egf{G}{\shift{A}}{\shift{p}}(u; \shift{x}; \shift{y}) \mathrm{d}u, x_1 \right). 
\end{align*}
\end{thm}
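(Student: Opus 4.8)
The plan is to decompose a Stirling permutation into its level‑$1$ blocks, observe that all of the level‑$\ge 2$ structure is carried inside those blocks but with the level index shifted down by one, and then recognize the resulting generating‑function identity as the composition of $\egf{F}{A_1}{p_1}$ with $\int_0^t \egf{G}{\shift{A}}{\shift{p}}$. First I would set up the decomposition. Every $\sg\in\Q_n$ factors uniquely as a concatenation $\sg=B_1B_2\cdots B_k$ where $B_1,\dots,B_k$ are the level‑$1$ blocks of $\sg$ read from left to right and $k=\bl^{(1)}(\sg)$; this uses only the routine facts that distinct level‑$1$ blocks are disjoint and together cover all positions of $\sg$. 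Writing $B_j=c_jW_jc_j$ with $W_j=(c_j,c_j)_\sg$ and letting $L_j\subseteq[n]$ be the set of labels occurring in $B_j$, one checks that $c_j=\min L_j$, that $\red(W_j)\in\Q_{|L_j|-1}$, and that $(L_1,\dots,L_k)$ is an ordered set partition of $[n]$ into nonempty blocks. Conversely, given an ordered set partition $(L_1,\dots,L_k)$ of $[n]$ together with, for each $j$, a word $\min(L_j)\,V_j\min(L_j)$ on $L_j$ with $\red(V_j)\in\Q_{|L_j|-1}$, the concatenation of these words is a Stirling permutation whose level‑$1$ blocks are exactly the $k$ given words. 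Thus $\sg\mapsto(L_1,\dots,L_k;B_1,\dots,B_k)$ is a bijection.

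Next I would transport the statistics through this bijection. By induction on $i$ one shows that for $i\ge 2$ the level‑$i$ blocks of $\sg$ contained in $B_j$ are, after reduction, precisely the level‑$(i-1)$ blocks of $\red(W_j)$, and that two level‑$i$ blocks with $i\ge 2$ are siblings in $\sg$ if and only if they lie in a common $B_j$ and their reductions are siblings in $\red(W_j)$. Since $W_j$ is a contiguous subword of $\sg$ and reduction is an order isomorphism, the adjacency (underline) conditions are inherited in both directions. Together with the observation that two level‑$1$ blocks of $\sg$ are adjacent iff they are consecutive in the list $B_1,\dots,B_k$, this gives $\bl^{(1)}(\sg)=k$, $p_1^{(1)}(\sg)=p_1(\tau)$ with $\tau:=\red(c_1c_2\cdots c_k)\in\S_k$, and for every $i\ge2$ that $\bl^{(i)}(\sg)=\sum_j\bl^{(i-1)}(\red W_j)$ and $p_i^{(i)}(\sg)=\sum_j p_i^{(i-1)}(\red W_j)$; moreover $\sg\in\av{\Q}{n}{\seq{A}}$ iff $\tau\in\av{\S}{k}{A_1}$ and $\red(W_j)\in\av{\Q}{|L_j|-1}{\shift{A}}$ for every $j$. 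Consequently the weight of $\sg$ factors as
\[
\prod_{i\ge1}x_i^{p_i^{(i)}(\sg)}y_i^{\bl^{(i)}(\sg)}=x_1^{p_1(\tau)}\,y_1^{k}\prod_{j=1}^{k}w(B_j),\qquad w(B_j):=\prod_{i\ge1}(\shift{x})_i^{(\shift{p})_i^{(i)}(\red W_j)}(\shift{y})_i^{\bl^{(i)}(\red W_j)}.
\]

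Then I would assemble the exponential generating function. Reduction gives a weight‑preserving bijection between the words $\min(L)\,V\min(L)$ on a finite set $L$ with $\red(V)\in\av{\Q}{|L|-1}{\shift{A}}$ and the set $\av{\Q}{|L|-1}{\shift{A}}$, so $\sum_{B}w(B)=\coeff{g}{\shift{A}}{\shift{p}}{|L|-1}(\shift{x};\shift{y})=:b_{|L|}$, the sum being over such words $B$ on $L$. Summing the weight over $\sg\in\av{\Q}{n}{\seq{A}}$ using the bijection above and factoring the product over blocks gives $\coeff{g}{\seq{A}}{\seq{p}}{n}(\seq{x};\seq{y})=\sum_{k\ge1}y_1^{k}\sum_{(L_1,\dots,L_k)}\bigl[\tau(L_1,\dots,L_k)\in\av{\S}{k}{A_1}\bigr]\,x_1^{p_1(\tau(L_1,\dots,L_k))}\prod_{j=1}^{k}b_{|L_j|}$, the inner sum over ordered set partitions of $[n]$ into $k$ nonempty blocks, where $\tau(L_1,\dots,L_k)=\red(\min L_1\cdots\min L_k)$. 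The key observation is that an ordered set partition of $[n]$ into $k$ blocks is the same datum as an unordered set partition $\{L_1,\dots,L_k\}$ together with a permutation $\tau\in\S_k$: since the block minima are distinct, there is a unique way to list the blocks so that the reduced word of their minima is $\tau$. As $\prod_j b_{|L_j|}$ depends only on the unordered partition, this re‑indexing factorizes the sum, yielding
\[
\coeff{g}{\seq{A}}{\seq{p}}{n}(\seq{x};\seq{y})=\sum_{k\ge1}y_1^{k}\Bigl(\sum_{\tau\in\av{\S}{k}{A_1}}x_1^{p_1(\tau)}\Bigr)\Bigl(\sum_{\{L_1,\dots,L_k\}\vdash[n]}\ \prod_{j=1}^{k}b_{|L_j|}\Bigr).
\]
The last parenthesis is the coefficient of $t^n/n!$ in $B(t)^k/k!$ with $B(t):=\sum_{\ell\ge1}(t^\ell/\ell!)\,b_\ell$, by the standard count of partitions into $k$ unordered blocks each carrying a $b$‑weight; and term‑by‑term integration gives $B(t)=\int_0^t\egf{G}{\shift{A}}{\shift{p}}(u;\shift{x};\shift{y})\,\mathrm{d}u$, using $b_\ell=\coeff{g}{\shift{A}}{\shift{p}}{\ell-1}(\shift{x};\shift{y})$ with the convention $\coeff{g}{\shift{A}}{\shift{p}}{0}=1$. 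Summing over $n$ and $k$, and including the $k=0$ term (which contributes the constant $1$, matching $\av{\S}{0}{A_1}=\{\emptyset\}$), gives $\egf{G}{\seq{A}}{\seq{p}}(t;\seq{x};\seq{y})=\sum_{k\ge0}\frac{(y_1B(t))^k}{k!}\sum_{\tau\in\av{\S}{k}{A_1}}x_1^{p_1(\tau)}=\egf{F}{A_1}{p_1}(y_1B(t),x_1)$, which is the claimed identity.

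The generating‑function bookkeeping above is routine; the substance lies in the structural facts, and in particular in the converse direction of the bijection (checking that gluing ``block‑shaped'' words over an ordered set partition really produces a Stirling permutation whose level‑$1$ blocks are exactly the pieces, with nothing accidentally fusing) and in the inductive claim that the level‑$\ge2$ block structure of $\sg$ is precisely the disjoint union of the down‑shifted block structures of the reduced inner words $\red(W_j)$, with siblinghood and the underline/adjacency conditions transported correctly under passing to a contiguous subword and under reduction. Once those facts are in hand, the ``unordered partition plus permutation equals ordered partition'' observation is exactly what produces the outer composition with $\egf{F}{A_1}{p_1}$, and the level shift is what produces the $\int_0^t\egf{G}{\shift{A}}{\shift{p}}$ sitting inside it.
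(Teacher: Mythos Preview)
Your proof is correct and follows essentially the same approach as the paper: decompose a Stirling permutation into its level-$1$ blocks, identify this with an unordered set partition together with a permutation of the block minima and a shifted Stirling permutation inside each block, and then read off the compositional formula. The paper presents the construction more tersely (``building a Stirling permutation by levels'') and jumps directly to the multinomial generating-function computation, whereas you spell out the bijection and the statistics transport more explicitly, but the underlying decomposition and the key ``unordered partition plus permutation equals ordered partition'' step are the same.
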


The outline of this paper is as follows. In Section \ref{sec:thm} 
we prove Theorem \ref{thm:main} and derive several important 
corollaries of it. In Sections \ref{sec:height} and \ref{sec:higher}, we describe several 
special classes of sequences $\seq{A}$ and $\seq{p}$ where we can 
 explicitly compute $\egf{G}{\seq{A}}{\seq{p}}(t; \seq{x}; \seq{y})$.
Section \ref{sec:trees} contains a brief explanation of the relationship between our results and patterns in labeled trees. In Section \ref{sec:k}, we prove a generalization of Theorem \ref{thm:main} for $k$-Stirling 
permutations.

\section{Proof and Corollaries of Theorem \ref{thm:main}}
\label{sec:thm}

We start this section by giving a proof of Theorem \ref{thm:main}.
Our proof is similar to the proof of the compositional formula as described in Section 5.1 of \cite{stanley}. 
\begin{proof}
Let $\seq{A} =(A_1,A_2, \ldots )$ and 
$\seq{p}= (p_1,p_2, \ldots )$. For any exponential generating function $c(t) = \sum_{n \geq 0} c_n \frac{t^n}{n!}$, 
we let $c(t)|_{\frac{t^n}{n!}}$ denote $c_n$. 

We also let $\av{\Q^{m}}{n}{\seq{A}}$ denote the set of all permutations in $\av{\Q}{n}{\seq{A}}$ with exactly $b$ level 1 blocks. We describe a 
way in which we can uniquely construct all elements of $\av{\Q^{m}}{n}{\seq{A}}$. We will refer to this process as building a Stirling permutation by levels. 
\begin{enumerate}
\item First, we partition the set $\{1, 2, \ldots , n\}$ into an (unordered) collection of 
$m$ nonempty sets $\{S_1, S_2, \ldots, S_m\}$.
\item For each $i = 1 $ to $m$, we create a Stirling permutation 
in $\av{\Q}{|S_i| - 1}{\shift{A}}$ out of the non-minimal elements of $S_i$. 
That is, the reduced form of this object should be a member of $\Q_{|S_i| - 1}(\shift{A})$. 
We place the minimal element in $S_i$ before and after this (unreduced) Stirling permutation. We call the resulting (unreduced) 
Stirling permutation $\sg^{(i)}$.
\item The final Stirling permutation is the concatenation
$\sg^{(\pi_1)} \ldots \sg^{(\pi_m)}$ where $\pi = \pi_1 \ldots \pi_m$ 
is an element of $\av{\S}{m}{A_1}$.  
\end{enumerate}
By definition,
\begin{align}
\label{pf1}
\sum_{\sg \in \av{\Q^{m}}{n}{\seq{A}}} \prod_{i \geq 1} x_{i}^{p_{i}^{(i)}(\sg)} y_{i}^{\bl^{(i)}(\sg)} &= y_{1}^{m} \left. \egf{G}{\seq{A}}{\seq{p}}(t; \seq{x}; \seq{y}) \right|_{\frac{t^{n} y_{1}^{m}}{n!}} .
\end{align}
From the construction of Stirling permutations by levels, we see that (\ref{pf1}) equals
\begin{align*}
&\sum_{\pi \in \av{\Q}{m}{A_1}} x_1^{p_1(\pi)} y_1^m \times \\
&\sum_{\overset{a_1+ \cdots + a_m =n}{a_i \geq 1}} \binom{n}{a_1, \ldots, a_m} \prod_{i=1}^m
\left( \left. \egf{G}{\shift{A}}{\shift{p}}(t; \shift{x}; \shift{y}) \right|_{\frac{t^{a_i-1}}{(a_i-1)!}}\right)\\
= &\sum_{\pi \in \av{\Q}{m}{A_1}} x_1^{p_1(\pi)}y_1^m \times \\
&\sum_{\overset{a_1+ \cdots + a_m =n}{a_i \geq 1}} \binom{n}{a_1, \ldots, a_m} 
\prod_{i=1}^m \left( \left. \int_0^t \egf{G}{\shift{A}}{\shift{p}}(u; \shift{x}; \shift{y})\mathrm{d}u \right|_{\frac{t^{a_i}}{a_i!}}\right) \\
= &\sum_{\pi \in \av{\Q}{m}{A_1}} x_1^{p_1(\pi)} y_1^m 
\left. \left( \int_0^t \egf{G}{\shift{A}}{\shift{p}}(u; \shift{x}; \shift{y})\mathrm{d}u 
\right)^m \right|_{\frac{t^n}{n!}}. 
\end{align*}
Thus
\begin{align*}
&\sum_{\sg \in \av{\Q}{n}{\seq{A}}} \prod_{i \geq 1} x_{i}^{p_i^{(i)}(\sg)} y_{i}^{\bl^{(i)}(\sg)} \\
= &\sum_{m=1}^n \sum_{\sg \in \av{\Q^{m}}{n}{\seq{A}}} 
\prod_{i \geq 1} x_{i}^{p_i^{(i)}(\sg)} y_{i}^{\bl^{(i)}(\sg)} \\
= &\sum_{m=1}^n \sum_{\pi \in \av{\Q}{m}{A_1}} x_1^{p_1(\pi)}y_1^m
\left. \left(  \int_0^t \egf{G}{\shift{A}}{\shift{p}}(u; \shift{x}; \shift{y})\mathrm{d}u 
\right)^m \right|_{\frac{t^n}{n!}} \\
=  &\left. \egf{F}{A_1}{p_1} \left(y_1\int_0^t \egf{G}{\shift{A}}{\shift{p}}(u; \shift{x}; \shift{y})\mathrm{d}u,x_1\right) \right|_{\frac{t^n}{n!}}.
\end{align*}  
Finally, we have
\begin{align*} 
\egf{G}{\seq{A}}{\seq{p}}(t; \seq{x}; \seq{y})  &= 
1 + \sum_{n \geq 1} \frac{t^n}{n!} \sum_{\sg \in \av{\Q}{n}{\seq{A}}} \prod_{i \geq 1} x_{i}^{p_i^{(i)}(\sg)} y_{i}^{\bl^{(i)}(\sg)} \\
&=1 + \sum_{n \geq 1} \frac{t^n}{n!}  \left. \egf{F}{A_1}{p_1} \left( y_1\int_0^t \egf{G}{\shift{A}}{\shift{p}}(u; \shift{x}; \shift{y})\mathrm{d}u,x_1 \right) \right|_{\frac{t^n}{n!}} \\
&= \egf{F}{A_1}{p_1} \left( y_1 \int_{0}^{t} \egf{G}{\shift{A}}{\shift{p}}(u; \shift{x}; \shift{y}) \mathrm{d}u, x_1 \right)
\end{align*}
which proves Theorem \ref{thm:main}.
\end{proof}

We note that  Theorem \ref{thm:main} implies the following recursion for $\coeff{g}{\seq{A}}{\seq{p}}{n}(\seq{x}; \seq{y})$, which we recall is equal to $n!$ times the coefficient of $t^n$ in $\egf{G}{\seq{A}}{\seq{p}}(t; \seq{x}; \seq{y})$. For any partition $\lambda \vdash n$, let $m_i(\lambda)$ be the multiplicity of the number $i$ in $\lambda$ and $\ell(\lambda)$ be the length, i.e.\ number of nonzero parts, of $\lambda$.

\begin{cor}
\label{cor:rec}
\begin{align*}
\coeff{g}{\seq{A}}{\seq{p}}{n}(\seq{x}; \seq{y}) &= \sum_{\lambda \vdash n} \binom{n}{\lambda} \frac{ y_{1}^{\ell(\lambda)} \coeff{f}{A_1}{p_1}{\ell(\lambda)}(x_1) } { m_{1}(\lambda)! \ldots m_{n}(\lambda)!}  \prod_{i=1}^{\ell(\lambda)} \coeff{g}{\shift{A}}{\shift{p}}{\lambda_{i} - 1}(\shift{x}; \shift{y}).
\end{align*}
\end{cor}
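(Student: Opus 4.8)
The plan is to extract Corollary \ref{cor:rec} directly from Theorem \ref{thm:main} by comparing coefficients of $t^n/n!$ on both sides, translating the integral and the composition with $\egf{F}{A_1}{p_1}$ into a sum over partitions. First I would recall that $\coeff{g}{\seq{A}}{\seq{p}}{n}(\seq{x};\seq{y}) = n! \left. \egf{G}{\seq{A}}{\seq{p}}(t;\seq{x};\seq{y})\right|_{t^n}$, and similarly for $\coeff{f}{A_1}{p_1}{m}(x_1)$ with $\egf{F}{A_1}{p_1}$, so that the statement is purely a matter of reading off the $t^n/n!$ coefficient of the right-hand side of Theorem \ref{thm:main}.

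Next I would set $H(t) = y_1 \int_0^t \egf{G}{\shift{A}}{\shift{p}}(u;\shift{x};\shift{y})\,\mathrm{d}u$, so that the right side of Theorem \ref{thm:main} is $\egf{F}{A_1}{p_1}(H(t), x_1) = 1 + \sum_{m\geq 1} \frac{1}{m!} \coeff{f}{A_1}{p_1}{m}(x_1)\, H(t)^m$. Since $\egf{G}{\shift{A}}{\shift{p}}(u;\shift{x};\shift{y}) = 1 + \sum_{j\geq 1}\frac{u^j}{j!}\coeff{g}{\shift{A}}{\shift{p}}{j}(\shift{x};\shift{y})$, integrating term by term gives $H(t) = y_1\sum_{j\geq 0} \frac{t^{j+1}}{(j+1)!}\coeff{g}{\shift{A}}{\shift{p}}{j}(\shift{x};\shift{y})$ (with the convention $\coeff{g}{\shift{A}}{\shift{p}}{0} = 1$), i.e.\ $H(t) = y_1 \sum_{a\geq 1}\frac{t^a}{a!}\coeff{g}{\shift{A}}{\shift{p}}{a-1}(\shift{x};\shift{y})$. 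Then I would expand $H(t)^m$ using the multinomial/exponential-formula bookkeeping: the coefficient of $t^n/n!$ in $H(t)^m$ is
\begin{align*}
\sum_{\overset{a_1+\cdots+a_m=n}{a_i\geq 1}} \binom{n}{a_1,\ldots,a_m}\, y_1^m \prod_{i=1}^m \coeff{g}{\shift{A}}{\shift{p}}{a_i-1}(\shift{x};\shift{y}).
\end{align*}
Summing over $m$ and multiplying by $\frac{1}{m!}\coeff{f}{A_1}{p_1}{m}(x_1)$ yields an ordered-composition formula for $\coeff{g}{\seq{A}}{\seq{p}}{n}$.

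The remaining step is the standard conversion from compositions $(a_1,\ldots,a_m)$ of $n$ into positive parts to partitions $\lambda\vdash n$. For a fixed partition $\lambda$ with $\ell(\lambda) = m$ and multiplicities $m_i(\lambda)$, there are exactly $\binom{m}{m_1(\lambda),\ldots,m_n(\lambda)} = \frac{m!}{m_1(\lambda)!\cdots m_n(\lambda)!}$ ordered compositions with those parts, each contributing the same value $\binom{n}{\lambda}\prod_{i=1}^{m}\coeff{g}{\shift{A}}{\shift{p}}{\lambda_i-1}(\shift{x};\shift{y})$, where $\binom{n}{\lambda}$ abbreviates the multinomial coefficient $\binom{n}{\lambda_1,\ldots,\lambda_m}$. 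Combining this count with the $\frac{1}{m!}$ and the $y_1^m = y_1^{\ell(\lambda)}$, the $m!$'s cancel and one obtains precisely
\begin{align*}
\coeff{g}{\seq{A}}{\seq{p}}{n}(\seq{x};\seq{y}) = \sum_{\lambda\vdash n} \binom{n}{\lambda}\,\frac{y_1^{\ell(\lambda)}\coeff{f}{A_1}{p_1}{\ell(\lambda)}(x_1)}{m_1(\lambda)!\cdots m_n(\lambda)!}\prod_{i=1}^{\ell(\lambda)}\coeff{g}{\shift{A}}{\shift{p}}{\lambda_i-1}(\shift{x};\shift{y}),
\end{align*}
which is the claim. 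No step is genuinely hard here: the only mild subtlety — the ``main obstacle'' such as it is — is being careful about the $n=0$ / empty-product conventions (so that $\coeff{f}{A_1}{p_1}{0} = 1$ matches the constant term $1$ of $\egf{F}{A_1}{p_1}$, and $\coeff{g}{\shift{A}}{\shift{p}}{0} = 1$), and keeping the bookkeeping of the two layers of multinomial coefficients straight when passing from compositions to partitions. In fact, much of this is already visible inside the proof of Theorem \ref{thm:main} itself, where the intermediate display expands exactly $\sum_\pi x_1^{p_1(\pi)} y_1^m (\int_0^t \egf{G}{\shift{A}}{\shift{p}}\,\mathrm{d}u)^m|_{t^n/n!}$ in the same compositional form; Corollary \ref{cor:rec} is then just the partition-indexed repackaging of that identity, so I would present it succinctly and refer back to that computation.
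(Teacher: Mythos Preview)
Your proposal is correct and follows exactly the approach the paper takes: the paper states only that the corollary ``follows immediately from taking coefficients in Theorem \ref{thm:main},'' and your argument is precisely the spelled-out version of that coefficient extraction, including the standard passage from ordered compositions to partitions via the multiplicity factor $m!/(m_1(\lambda)!\cdots m_n(\lambda)!)$.
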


This corollary follows immediately from taking coefficients in Theorem \ref{thm:main}. Although this recursion is still quite complicated, it is useful for generating small examples for sequences of patterns which do not have nice exponential generating functions. 

Next, we see that  Theorem \ref{thm:main} allows us to derive a statement about a type of Wilf equivalence for block patterns.

\begin{cor}
\label{cor:wilf}
Let $\seq{A}$ and $\seq{A^{\prime}}$ be sequences of sets of patterns and $\seq{p}$ and $\seq{p^{\prime}}$ be sequences of patterns. If  
\begin{align*}
\egf{G}{\seq{A}}{\seq{p}}(t; \seq{x}; \seq{y}) &= \egf{G}{\seq{A^{\prime}}}{\seq{p^{\prime}}}(t; \seq{x}; \seq{y}) 
\end{align*}
then $\egf{F}{A_i}{p_i}(t, z) =  \egf{F}{A^{\prime}_i}{p^{\prime}_i}(t, z)$ for all $i \in \mathbb{P}$. In the other direction, if we also assume that there exists some $r \geq 0$ such that 
\begin{align*}
\egf{G}{(A_r, A_{r+1}, \ldots)}{(p_r, p_{r+1}, \ldots)}(t; \seq{x}; \seq{y}) &= \egf{G}{(A^{\prime}_r, A^{\prime}_{r+1}, \ldots)}{(p^{\prime}_r, p^{\prime}_{r+1}, \ldots)}(t; \seq{x}; \seq{y})
\end{align*}
then the converse is true, i.e.\ $\egf{G}{\seq{A}}{\seq{p}}(t; \seq{x}; \seq{y}) = \egf{G}{\seq{A^{\prime}}}{\seq{p^{\prime}}}(t; \seq{x}; \seq{y}) $.
\end{cor}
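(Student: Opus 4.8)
The plan is to exploit Theorem~\ref{thm:main} as a recursion in the index of the sequences. Write $\kshift{j}{A}$ for the $j$-fold shift $(A_{j+1}, A_{j+2}, \ldots)$ and similarly for $\seq{p}$, so that $\kshift{0}{A} = \seq{A}$. For the first (easy) direction, suppose $\egf{G}{\seq{A}}{\seq{p}} = \egf{G}{\seq{A'}}{\seq{p'}}$. I would argue by induction on $i$ that $\egf{F}{A_i}{p_i} = \egf{F}{A'_i}{p'_i}$ and $\egf{G}{\kshift{i}{A}}{\kshift{i}{p}} = \egf{G}{\kshift{i}{A'}}{\kshift{i}{p'}}$ simultaneously. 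The base case $i=0$ is the hypothesis for the $G$'s; then apply Theorem~\ref{thm:main} to both sides: it expresses $\egf{G}{\kshift{i}{A}}{\kshift{i}{p}}$ as $\egf{F}{A_{i+1}}{p_{i+1}}$ evaluated at the argument $\bigl(y_{i+2}\int_0^t \egf{G}{\kshift{i+1}{A}}{\kshift{i+1}{p}}\,\mathrm{d}u,\ x_{i+2}\bigr)$ — here the key observation is that the only places $x_{i+2}$ and $y_{i+2}$ appear on that side are as these two arguments of $\egf{F}{A_{i+1}}{p_{i+1}}$, and the inner integral does not involve $x_{i+2}$. Comparing the two sides as functions of $x_{i+2}$ and $y_{i+2}$ (with all other variables fixed, treating them as formal variables) forces $\egf{F}{A_{i+1}}{p_{i+1}} = \egf{F}{A'_{i+1}}{p'_{i+1}}$ as bivariate series, and then — once the outer functions agree — forces $\int_0^t \egf{G}{\kshift{i+1}{A}}{\kshift{i+1}{p}}\,\mathrm{d}u = \int_0^t \egf{G}{\kshift{i+1}{A'}}{\kshift{i+1}{p'}}\,\mathrm{d}u$, hence (differentiating, both being power series with constant term $1$) $\egf{G}{\kshift{i+1}{A}}{\kshift{i+1}{p}} = \egf{G}{\kshift{i+1}{A'}}{\kshift{i+1}{p'}}$. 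This completes the induction and yields $\egf{F}{A_i}{p_i} = \egf{F}{A'_i}{p'_i}$ for all $i \in \mathbb{P}$.

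For the converse, assume additionally that the two tail generating functions agree at some level $r$, i.e.\ $\egf{G}{\kshift{r-1}{A}}{\kshift{r-1}{p}} = \egf{G}{\kshift{r-1}{A'}}{\kshift{r-1}{p'}}$ (reindexing the statement's $r$). I would run a downward induction on $j$ from $r-1$ to $0$ proving $\egf{G}{\kshift{j}{A}}{\kshift{j}{p}} = \egf{G}{\kshift{j}{A'}}{\kshift{j}{p'}}$. The base case is the added hypothesis. For the inductive step, apply Theorem~\ref{thm:main} once more:
\begin{align*}
\egf{G}{\kshift{j}{A}}{\kshift{j}{p}}(t; \seq{x}; \seq{y}) &= \egf{F}{A_{j+1}}{p_{j+1}}\Bigl( y_{j+2}\textstyle\int_0^t \egf{G}{\kshift{j+1}{A}}{\kshift{j+1}{p}}(u; \cdots)\,\mathrm{d}u,\ x_{j+2} \Bigr),
\end{align*}
and the analogous identity for the primed data. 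By the first part of the corollary we have $\egf{F}{A_{j+1}}{p_{j+1}} = \egf{F}{A'_{j+1}}{p'_{j+1}}$, and by the inductive hypothesis the inner functions $\egf{G}{\kshift{j+1}{A}}{\kshift{j+1}{p}}$ and $\egf{G}{\kshift{j+1}{A'}}{\kshift{j+1}{p'}}$ coincide; composing equal outer functions with equal inner arguments gives $\egf{G}{\kshift{j}{A}}{\kshift{j}{p}} = \egf{G}{\kshift{j}{A'}}{\kshift{j}{p'}}$. Taking $j=0$ finishes the proof.

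The one point that needs care — and the main potential obstacle — is the coefficient-extraction argument that lets us "cancel" the outer $\egf{F}{A_1}{p_1}$ and peel off the integral. We are working with formal power series in infinitely many variables $t, x_1, x_2, \ldots, y_1, y_2, \ldots$; I would make the comparison rigorous by noting that $\egf{F}{A_1}{p_1}(v, x_1) = \sum_{m \geq 0} \frac{v^m}{m!} f^{A_1,p_1}_m(x_1)$ with $f^{A_1,p_1}_0 = 1$, and substituting $v = y_1 \int_0^t \egf{G}{\shift{A}}{\shift{p}}\,\mathrm{d}u =: y_1 V(t;\shift{x};\shift{y})$ where $V$ has zero constant term and $V$ is independent of $y_1$; then $\egf{G}{\seq{A}}{\seq{p}} = \sum_m \frac{y_1^m}{m!} f^{A_1,p_1}_m(x_1) V^m$, so the coefficient of $y_1^m$ recovers $\frac{1}{m!} f^{A_1,p_1}_m(x_1) V^m$, and matching $m=1$ gives both $f^{A_1,p_1}_1(x_1) = f^{A'_1,p'_1}_1(x_1)$ (coefficient of $t^1$, using that $V$ has linear term $t$ exactly) and the equality of the $V$'s once all $f_m$ agree; matching higher $m$, or simply invoking that $f^{A_1,p_1}_1$ agrees and iterating, pins down every $f_m$. (Alternatively one avoids even this by observing $\partial_{y_1}\egf{G}{\seq{A}}{\seq{p}}\big|_{y_1 = 0} = f^{A_1,p_1}_1(x_1)\,V$ and $\partial_t V|_{t=0}=1$.) I expect this bookkeeping to be the only nontrivial part; everything else is a clean two-way induction powered by Theorem~\ref{thm:main}.
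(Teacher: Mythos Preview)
Your argument is sound and reaches the same conclusion, but the route differs from the paper's in one instructive way. For the forward direction the paper does not peel off layers by coefficient extraction in $y_1$; instead it specializes $y_j = 0$ for all $j > i$. Combinatorially this kills every Stirling permutation of height exceeding $i$, so for $i=1$ one reads off directly
\[
\egf{G}{\seq{A}}{\seq{p}}(t;\seq{x};y_1,0,0,\ldots)=\egf{F}{A_1}{p_1}(ty_1,x_1),
\]
since height-$1$ Stirling permutations are just permutations with each letter doubled. For general $i$ the same specialization collapses the innermost shifted $G$ to $1$, leaving a finite nest of known $F$'s with $\egf{F}{A_i}{p_i}(y_it,x_i)$ at the core; the paper then appeals to the inductive hypothesis to cancel the outer layers. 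Your approach keeps all variables alive and instead isolates $f_m^{A_1,p_1}(x_1)\,V^m$ as the coefficient of $y_1^m$; this is more explicit about the cancellation step (which the paper leaves at ``induction implies''), and it yields the stronger intermediate statement that all shifted $G$'s coincide, not just the $F$'s. Both arguments tacitly need the outer $F$ not to be identically $1$ (e.g.\ $1\notin A_1$) for the composition to be injective.

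Two small slips to fix. First, when you apply Theorem~\ref{thm:main} to $\egf{G}{\kshift{i}{A}}{\kshift{i}{p}}$ the outer arguments are $y_{i+1}$ and $x_{i+1}$, not $y_{i+2}$ and $x_{i+2}$ (check $i=0$). Second, in the converse direction the equality $\egf{F}{A_{j+1}}{p_{j+1}}=\egf{F}{A'_{j+1}}{p'_{j+1}}$ is the \emph{hypothesis} of that direction, not a consequence of the first part. The converse itself matches the paper exactly (``iterate the theorem $r$ times'').
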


%

\begin{proof}
We begin by assuming
\begin{align*}
\egf{G}{\seq{A}}{\seq{p}}(t; \seq{x}; \seq{y}) &= \egf{G}{\seq{A^{\prime}}}{\seq{p^{\prime}}}(t; \seq{x}; \seq{y}) .
\end{align*}
We will prove $\egf{F}{A_i}{p_i}(t, z) =  \egf{F}{A^{\prime}_i}{p^{\prime}_i}(t, z)$ for all $i$ by induction on $i$. 

For the base case $i = 1$, we set $y_j = 0$ for each $j \geq 2$. This yields 
\begin{align*}
\egf{G}{\seq{A}}{\seq{p}}(t; \seq{x}; y_1, 0, 0, \ldots) &= 1 + \sum_{n \geq 1} \frac{t^n}{n!} \sum_{\stackrel{\sg \in \av{\Q}{n}{\seq{A}}}{\text{height}(\sg) = 1}}x_1^{p_1^{(1)}(\sg)} y_1^{n}
&= \egf{F}{A_1}{p_1}(ty_1, x_1)
\end{align*}
because Stirling permutations of height 1 correspond to permutations with each number written twice consecutively. We can apply the same argument to $\egf{G}{\seq{A^{\prime}}}{\seq{p^{\prime}}}$ to finish the base case. 

For the induction step, we wish to prove $\egf{F}{A_i}{p_i} =  \egf{F}{A^{\prime}_i}{p^{\prime}_i}$ assuming this statement has been proven for each positive integer less than $i$.  We set $y_j = 0$ for $j > i$. The theorem states that $\egf{G}{\seq{A}}{\seq{p}}$ can be obtained by repeatedly integrating $\egf{F}{A_i}{p_i}$ and plugging that result into $\egf{F}{A_j}{p_j}$, where $j = i-1, i-2, \ldots, 1$. Since the two generating functions $\egf{G}{\seq{A}}{\seq{p}}$  and 
$\egf{G}{\seq{A}^{\prime}}{\seq{p}^{\prime}}$ are known to be equal, induction implies that we must have $\egf{F}{A_r}{p_r} =  \egf{F}{A^{\prime}_r}{p^{\prime}_r}$. 

In order to prove the other direction, we simply iterate the theorem $r$ times.
\end{proof}

Perhaps the most classical example of Wilf equivalence is the result of \\ \cite{knuth} that all classical patterns of length 3 are Wilf equivalent. Applying this fact to Corollary \ref{cor:wilf}, we learn that, if there exists an $i \geq 1$ such that $A_i = \{q\}$, where $q \in \S_3$ is a classical pattern of length 3, and $p_i = \emptyset$, then we can change $q$ to a different classical pattern of length 3 without altering $\egf{G}{A}{E}$. 

We say a consecutive pattern $q$ of length $m$ is \emph{minimally overlapping} if the shortest permutation that contains at least two copies of $q$ has length $2m-1$. In other words, no two occurrences of $q$ can overlap in more than one position. It was conjectured in \cite{elizalde} and later proven independently in \\ \cite{duane-remmel} and \cite{dotsenko-khoroshkin} that any two minimally overlapping patterns $q$ and $q^{\prime}$ of length $m$ are strongly Wilf equivalent, i.e.\ $\egf{F}{\{\emptyset\}}{q}(t, z) = \egf{F}{\{\emptyset\}}{q^{\prime}}(t, z)$, whenever $q_1 = q^{\prime}_1$ and $q_m = q^{\prime}_m$. Corollary \ref{cor:wilf} implies that, for any $i \geq 1$ such that $A_i = \emptyset$ and $p_i = q$, where $q$ is a minimally overlapping pattern of length $m$, we can change $q$ to a minimally overlapping pattern $q^{\prime}$ of length $m$ without changing $\egf{G}{\seq{A}}{\seq{p}}$ as long as $q_1 = q^{\prime}_1$ and $q_m = q^{\prime}_m$. 

In fact, essentially any result about Wilf equivalence of permutation patterns can be extended via Corollary \ref{cor:wilf} in this way. Some other examples of such results can be found in \cite{stankova, backelin-west-xin}. 

\section{Stirling Permutations of Restricted Height}
\label{sec:height}


In this section as well as the next, we derive a variety of generating functions from Theorem \ref{thm:main}. Some of these generating functions are well-studied, while others seem to be new and may be of interest in future work. In this section, we study Stirling permutations whose height is at most some fixed number. These permutations are especially nice for two reasons. First, Stirling permutations of height 1 correspond to permutations (with every entry written twice consecutively). This allows us to directly apply any known exponential generating function from the theory of permutation patterns. Second, our main theorem provides a way to understand Stirling permutations of height at most $h$ if we understand Stirling permutations of height at most $h-1$. In other words, if we have an exponential generating function for the permutation pattern case and we know how to integrate this generating function, we can provide closed-form generating functions for the restricted-height Stirling permutation case. Even if we cannot integrate the generating function, we can obtain initial terms for the sequence from the recursion in Corollary \ref{cor:rec}.

\subsection{Height $\leq 2$}

First we will deal with Stirling permutations of height at most 2. In order to obtain the class of Stirling permutations whose height is at most 2, we want to ``avoid'' the set of patterns $\{1\}$ at level 3. Since this pattern is unavoidable, the resulting class of permutations can only have blocks at levels 1 and 2. Thus throughout this section we will set $A_3 = \{1\}$.

These objects fit nicely into the context of the main theorem because the shifted pattern sequence $\shift{A} = (\emptyset, \{1\}, \emptyset, \ldots)$ implies that the generating function \\$\egf{G}{\shift{A}}{\shift{p}}$ is a sum over permutations. We will use this idea repeatedly in the remainder of this section to produce several examples.

\ \\
{\bf Example 1.} $\seq{A} = (\{21\}, \{\emptyset\}, \{1\}, \{\emptyset\}, \ldots )$ and $\seq{p} = (\emptyset, \emptyset, \ldots)$. \\

Any Stirling permutation that avoids $\seq{A}$ is equal to a series of blocks of height 1, written in order of increasing minimal element. These objects biject to permutations decomposed into cycles, so we should expect to see the unsigned Stirling numbers of the first kind. For example, $14422331577566$ is an example of such a Stirling permutation for $n = 7$. This corresponds to the permutation with cycle decomposition $(1, 4, 2, 3), (5, 7), (6)$. We have $\egf{F}{\{21\}}{\emptyset}(t, z) = \exp(t)$ and 
\begin{align*}
\int_{0}^{t} \egf{G}{\shift{A}}{\shift{p}}(u; \seq{x}; \seq{y}) \mathrm{d}u &= \int_{0}^{t} \frac{1}{1-uy_1} \mathrm{d} u \\
&= \frac{- \log(1 - ty_1)}{y_1}.
\end{align*}
Thus
\begin{align}
\label{stirling1}
\egf{G}{\seq{A}}{\seq{p}}(t; \seq{x}; \seq{y}) &= \exp \left( - \frac{y_1}{y_2} \log (1 - ty_2) \right) = (1 - ty_2)^{-y_{1}/y_{2}}
\end{align}
which, as we would expect from the above discussion, is equal to the exponential generating function
\begin{align*}
1 + \sum_{n \geq 1} \frac{t^{n}}{n!} \sum_{\pi \in \S_n} y_1^{\# \text{cycles in } \pi} y_2^{n - \# \text{ cycles in } \pi} .
\end{align*}
If we set $y_2 = 1$, we obtain the exponential generating function for the unsigned Stirling numbers of the first kind.

\ \\
{\bf Example 2.} $\seq{A} = (\{21\}, \{21\}, \{1\}, \{\emptyset\}, \ldots)$ 
and $\seq{p} = (\emptyset, \emptyset, \ldots )$. \\

In this case,  we obtain Stirling permutations whose blocks increase from left to right at both level 1 and level 2. One can see that the Stirling permutations $\sg \in \Q_n$ that avoid $\seq{A}$ correspond to partitions of the set $\{1, 2, \ldots, n\}$. Indeed, Theorem \ref{thm:main} gives
\begin{align}
\label{stirling2}
\egf{G}{\seq{A}}{\seq{p}}(t; \seq{x}; \seq{y}) &= \exp \left( \frac{y_1}{y_2} \left( \exp(ty_2) - 1 \right) \right)
\end{align}
which is equal to 
\begin{align*}
\sum_{n \geq 0} \frac{t^n}{n!} \sum_{\tau} y_{1}^{\# \text{ parts of } \tau} y_{2}^{n - \# \text{ parts of } \tau} 
\end{align*}
where the second sum is over partitions of the set $\{1, 2, \ldots n\}$. If we set $y_2 = 1$ and take $n!$ times the coefficient of $t^{n}y_{1}^{k}$ in this function, we obtain the triangle of Stirling numbers of the second kind. Through similar methods, we can find the ordered unsigned Stirling numbers of the first kind and the ordered Stirling numbers of the second kind by setting $\seq{A} = (\{\emptyset\}, \{\emptyset\}, \{1\}, \{\emptyset\}, \ldots)$ and $(\{\emptyset\}, \{21\}, \{1\}, \{\emptyset\}, \ldots)$, respectively.

Next we shall show how we can enumerate simple patterns at level 1. 

\ \\
{\bf Example 3.} $\seq{A} = (\emptyset, \emptyset, \{1\}, \emptyset, \ldots)$ and $\seq{p} = (\underline{21}, \emptyset, \ldots)$. \\

In this case we are counting block descents at level 1 while 
not avoiding any other patterns. 
We know that
\begin{align*}
\egf{F}{\emptyset}{\underline{21}}(t, z) &= \frac{z-1}{z - \exp(t(z-1))}
\end{align*}
and
\begin{align*}
\int_{0}^{t} \egf{G}{\shift{A}}{\shift{p}}(u; \seq{x}; \seq{y}) \mathrm{d} u &= \frac{- \log(1 - ty_1)}{y_1} .
\end{align*}
Plugging these into the main theorem, we obtain
\begin{align}
\label{stirling1des1}
 \egf{G}{\seq{A}}{\seq{p}}(t; \seq{x}; \seq{y}) &= \frac{x_{1}-1}{x_{1} - (1 - ty_{2})^{y_{1}(1-x_{1})/y_{2}}} .
\end{align}
This function refines the ordered unsigned Stirling numbers of the first kind, since setting $x_1 = 1$ yields the generating function for these numbers. We can think of the ordered unsigned Stirling numbers of the first kind as counting the number of ordered cycle decompositions. For example, in this setting $(5), (1, 4, 2), (3)$ and $(5), (3), (1, 4, 2)$ are counted separately. The generating function above enumerates descents (i.e.\ consecutive decreases) among minimal elements in cycles. In our examples, the first example has 1 descent and the second has 2. Although this is a classical application of the compositional formula for exponential generating functions, we are not aware of any work on these patterns.

\ \\
{\bf Example 4.} $\seq{A} = (\emptyset, \{21\},\{1\}, \emptyset, \ldots)$ and $\seq{p} = (\underline{21}, \emptyset, \ldots)$. \\

In this case we are counting block descents at level 1 while while 
insisting that the level two blocks in any level 1 block are increasing.   
In this case, $\egf{G}{\shift{A}}{\shift{p}}(u; \seq{x}; \seq{y})$ equals
$\exp(y_2t)$ so that 
\begin{align*}
\int_{0}^{t} \egf{G}{\shift{A}}{\shift{p}}(u; \seq{x}; \seq{y}) \mathrm{d} u &= \frac{\exp(y_2t)}{y2}.
\end{align*}
Hence  
\begin{align}
\label{stirling2des2}
\egf{G}{\seq{A}}{\seq{p}}(t; \seq{x}; \seq{y}) &= \frac{x_1 - 1}{x_1 - \exp \left( \frac{(x_1 - 1) y_1}{y_2}  ( \exp(ty_2) - 1) \right)} .
\end{align}
This is a refinement of the ordered Stiring numbers of the second kind. In particular, if we write an ordered set partition in the form $458|12|9|367$, using bars to separate parts, this function counts the number of descents between minimal elements. For our example, we would have 2 such descents. This function also does not seem to be studied in the literature.

\ \\
{\bf Example 5.} $\seq{A} = (\emptyset,\emptyset, \{1\}, \emptyset, \ldots )$ and 
$\seq{p} =(\underline{21},\underline{21}, \emptyset, \ldots )$.\\
 
In this case, $\egf{G}{\seq{A}}{\seq{p}}(t; \seq{x}; \seq{y})$ keeps track 
of the number of descents and blocks at level 1 and the number of 
descents and blocks at level 2 in Stirling permutation whose 
height is $\leq 2$. In this 
case, 
\begin{align*}
\egf{F}{\emptyset}{\underline{21}}(t, z) &= \frac{z-1}{\exp(t(z-1)) -z}, \\
\egf{G}{\shift{A}}{\shift{p}}(t; \seq{x}; \seq{y}) &= 
\frac{x_2-1}{\exp(y_2t(x_2-1)) -x_2}, 
\ \mbox{and}  \\
\int_{0}^{t} \egf{G}{\shift{A}}{\shift{p}}(u; \seq{x}; \seq{y}) \mathrm{d} u &=  \frac{y_2t(x_2-1)+\ln\left(\frac{1-x_2}{\exp(y_2t(x_2-1))-x_2}\right)}{x_2y_2}.
\end{align*}
This yields
\begin{align}
\label{des1des2}
\egf{G}{\seq{A}}{\seq{p}}(t; \seq{x}; \seq{y}) &= \frac{x_1 - 1}{x_1 - 
\exp \left(  \frac{y_1(x_1-1)}{x_2y_2} \left(y_2t(x_2-1)+\ln\left(\frac{1-x_2}{\exp(y_2t(x_2-1))-x_2}\right)  \right) \right)}.
\end{align}
This is a further refinement of the ordered unsigned Stirling numbers of the first kind. Here we are counting descents among minimal elements using $x_1$ and descents among non-minimal elements inside each cycle with $x_2$. 

\ \\
{\bf Example 6.} $\seq{A} = (\{\underline{321}\},\{21\}, \{1\}, \emptyset, \ldots )$ and 
$\seq{p} =(\underline{21},\emptyset, \emptyset, \ldots )$.\\

In this case, we are keeping track of block descents at level one while 
insisting that that the level 2 blocks in each level 1 block are 
increasing. To compute this generating function, we need a result from \cite{MenRem1}, namely that
\begin{align*}
F^{\{\underline{321}\},\underline{21}}(x,t) &= 
\frac{\exp(t/2)}{\cos(\frac{t\sqrt{4x-1}}{2}) - \frac{1}{\sqrt{4x-1}} \sin (\frac{t\sqrt{4x-1}}{2})}
\end{align*}
which is the generation function for distribution of 
descents in permutations that avoid $\underline{321}$. 

Thus 
\begin{align*}
\egf{G}{\seq{A}}{\seq{p}}(t; \seq{x}; \seq{y}) &= 
\frac{\exp(\exp(y_2t)/2y_2)}{\cos(\frac{\exp(y_2t)\sqrt{4x_1-1}}{2y_2}) - \frac{1}{\sqrt{4x_1-1}} \sin (\frac{\exp(y_2t)\sqrt{4x-1}}{2y_2})}.
\end{align*}

Of course, we should also consider patterns other than the descent pattern. Unfortunately, there are few exponential generating functions for enumeration of other patterns, so we will mostly deal with avoidance. We will set $\seq{p} = (\emptyset, \emptyset, \ldots)$, although in most of these examples we could enumerate descents at level 1 or level 2. If we set $\seq{A} = \{\{\underline{123}, \underline{321}\}, \{21\}, \{1\}, \{\emptyset\}, \ldots)$ we obtain objects that correspond to ordered partitions of $\{1, 2, \ldots, n\}$ whose minimal elements form a zigzag permutation. A classical result of \cite{andre} states that
\begin{align*}
\egf{F}{\{\underline{123}, \underline{321}\}}{\emptyset}(t, z) &= 2\sec(t) + 2\tan(t)
\end{align*}
so
\begin{align}
\label{zigzag}
\egf{G}{\seq{A}}{\seq{p}}(t; \seq{x}; \seq{y}) &= 2\sec \left( \frac{-y_1}{y_2} \log(1 - ty_2) \right)  + 2 \tan \left(\frac{-y_1}{y_2} \log(1 - ty_2) \right).
\end{align}

If we wish to switch $A_1$ and $A_2$ we just need to integrate Andre's generating function. For a more modern example, one could consult \\ \cite{elizalde-noy}, in which the authors obtained many exponential generating functions for permutations that avoid certain consecutive patterns.

\subsection{Height $\leq 3$}
\label{ssec:height3}

If we wish to look at the set of Stirling permutations of height at most 3, we set $A(4) = \{1\}$. Computing the generating function in this case involves one more integral than in the height $\leq 2$ setting, so it becomes less likely that we can derive a closed-form generating function. Considering the examples in the previous section, we can integrate (\ref{stirling1}) and (\ref{stirling2}). Thus we have closed-form generating functions for sequences like $\seq{A} = (A_1, \{21\}, \{\emptyset\}, \{1\}, \{\emptyset\}, \ldots)$ and $\seq{A} = (A_1, \{\emptyset\}, \{21\}, \{1\}, \{\emptyset\}, \ldots)$, assuming that each $p_i = \emptyset$ and the generating function $\egf{F}{A_1}{\emptyset}(t, z)$ is known. However, it seems as if the functions in (\ref{stirling1des1}), (\ref{stirling2des2}), (\ref{des1des2}), and (\ref{zigzag}) do not have nice integrals.

However, we can end with a simple example which showes the power 
of the techniques. For example, we can consider 
$$\cosh(t) = \sum_{n \geq 0} \frac{t^{2n}}{(2n)!}$$
as the generating function for increasing permutations of even length. 
Using the ideas of the proof of Theorem \ref{thm:main}, it is 
easy to see that 
$$\egf{F}{\{\emptyset\}}{\emptyset}\left( y_1\int_0^t \cosh(y_2u)\mathrm{d}u,x_1 \right) = 
\frac{1}{1-\frac{y_1}{y_2}\sinh(y_2t)}$$ is the generating function 
of $y_1^{\bl_1(\sg)}y_2^{\bl_2(\sg)}$ over all 
of Stirling permutations $\sg$ of height $\leq 2$ such that every 
block at level 1 contains an even number of level two blocks. 

Next, one can compute that 
\begin{align*}
H(t, y_2, y_3) & = \int_0^t \frac{1}{1-\frac{y_2}{y_3}\sinh(y_3u)} \mathrm{d}u \\
&= 
\frac{2\mbox{arctan}\left( \frac{y_2}{\sqrt{-(y_2^2+y_3^2)}}\right) +
2 \mbox{arctan}
\left( \frac{-y_2-\tanh(y_3t/2)}{\sqrt{-(y_2^2+y_3^2)}} \right)}{\sqrt{(y_2^2+y_3^2)}}.
\end{align*}  

It then follows that 
$$
\frac{1}{1-y_1H(t, y_2, y_3)}
$$ 
is the generating function of $y_1^{\bl_1(\sg)}y_2^{\bl_2(\sg)}y_3^{\bl_3(\sg)}$ over all 
of Stirling permutations $\sg$ of height $\leq 3$ such that every 
block at level 2 contains an even number of level three blocks and 
$$
\frac{1}{1-y_1\left( \frac{H(t, y_2, y_3)+H(-t, y_2, y_3)}{2}\right)}
$$ 
 is the generating function of $y_1^{\bl_1(\sg)}y_2^{\bl_2(\sg)}y_3^{\bl_3(\sg)}$ over all 
of Stirling permutations $\sg$ of height $\leq 3$ such that every 
block at level 2 contains an even number of level 3 blocks and 
every block at level 1 contains an even number of level 2 blocks.
 
Clearly many other examples of this type can be constructed 
where we specify the conditions of the allowable level 2 and level 3 blocks 
in Stirling permutations of height $\leq 3$.

\section{Ignoring Higher Blocks}
\label{sec:higher}
In this section, we no longer set a maximum height for the Stirling permutations that we will consider. Instead, we ``ignore'' all blocks above a certain level. More specifically, we insist that $A_i = \{\emptyset\}$, $p_i = \emptyset$, and we set $y_i =1$ for all $i$ larger than some fixed integer. These conditions work well with Theorem \ref{thm:main} because we already have an exponential generating function for the case where $A_i = p_i = \emptyset$ 
and $y_i =1$ for all $i$, namely the exponential generating function for 
$|\Q_n|$
\begin{align*}
\sum_{n \geq 0} |\Q_n| \frac{t^n}{n!} &= \frac{1}{\sqrt{1-2t}} .
\end{align*}
Furthermore, we can integrate this function
\begin{align*}
\int_{0}^{t} \frac{\mathrm{d}u}{\sqrt{1-2u}} &= 1 - \sqrt{1-2t} .
\end{align*}
Then Theorem \ref{thm:main} allows us to introduce patterns to avoid and count at level 1. As before, the main obstacles to any situation here are finding the exponential generating function for the permutation case and integrating this function.

\ \\
{\bf Example 1.} $\seq{A} = \{\{21\}, \emptyset, \ldots\}$, $\seq{p} = (\emptyset, \emptyset, \ldots )$, and $y_i =1$ for all $i \geq 2$. \\

In this situation, we can obtain the generating 
function of $y_1^{\bl_1(\sg)}$ over the set of Stirling permutations that are increasing at level 1. Since
\begin{align*}
\egf{F}{21}{\emptyset}(t) &= \exp(t)
\end{align*}
we have
\begin{align*}
\egf{G}{\seq{A}}{\seq{p}}(t; \seq{x}; \seq{y}) &= \exp \left( y_1(1 - \sqrt{1-2t}) \right).
\end{align*}
Surprisingly, this is exactly the exponential generating function for the modified Bessel polynomials! These polynomials were first defined in \\ \cite{bessel} and earned their name from a connection to Bessel functions. Following \cite{carlitz}, we define the modified Bessel polynomials
\begin{align*}
B_n(y) &= \sum_{k=1}^{n} \frac{(2n - k - 1)!}{2^{n-k}(n-k)!(k-1)!} y^{k} .
\end{align*}
In \cite{carlitz}, the author proved
\begin{align*}
\sum_{n \geq 0} \frac{t^{n}}{n!} B_{n}(y) = \exp\left(y(1 - \sqrt{1 - 2t})\right) .
\end{align*}
This shows that the number of $\sg \in \Q_n$ with $k$ level 1 blocks whose minimal elements increase from left to right is equal to $\frac{(2n - k - 1)!}{2^{n-k}(n-k)!(k-1)!}$. We do not know of any other proofs of this fact.

\ \\
{\bf Example 2.} $\seq{A} = (\emptyset, \emptyset, \emptyset, \ldots )$, $\seq{p} = (\underline{21}, \emptyset, \emptyset, \ldots)$, and 
$y_i =1$ for all $i \geq 2$. \\

By altering $A_1$ and $p_1$ we can obtain many other interesting generating functions. In this particular example, we  count occurrences of the descent pattern $\underline{21}$ at level 1 and make no restrictions at level 1. Using the exponential generating function for descents over permutations in Theorem \ref{thm:main} produces
\begin{align*}
\egf{G}{\seq{A}}{\seq{p}}(t; \seq{x}; \seq{y}) &= \frac{x_1 - 1}{x_1 - \exp \left( y_1(x_1-1) (1 - \sqrt{1-2t}) \right)} .
\end{align*}

\ \\
{\bf Example 3.} $\seq{A} = (\emptyset, \{21\}, \emptyset, \ldots )$, 
$\seq{p} = (\underline{21}, \emptyset, \emptyset, \ldots)$, and 
$y_i =1$ for all $i \geq 3$. \\

In this case, $\egf{G}{\seq{A}}{\seq{p}}(t; \seq{x}; \seq{y})$ is the 
generating function that keeps track of the number of 
descents and blocks at level 1 and the number of blocks at level 2 over 
the set of Stirling permutations whose level 2 blocks are increasing 
in each level 1 block. We obtain 
\begin{align*}
\egf{F}{\emptyset}{\underline{21}}(t, z) &= \frac{z-1}{\exp(t(z-1)) -z}, \\
\egf{G}{\shift{A}}{\shift{p}}(t; \seq{x}; \seq{y}) &= 
\exp (y_2(1-\sqrt{1-2u})), \ \mbox{and}  \\
\int_{0}^{t} \egf{G}{\shift{A}}{\shift{p}}(u; \seq{x}; \seq{y}) 
\mathrm{d}u \\
&=  \int_{0}^{t} \exp(y_2(1-\sqrt{1-2u}))\mathrm{d}u \\
&= \frac{-1-y_2+\exp(y_2(1-\sqrt{1-2u}))(1+y_2\sqrt{1-2t})}{y_2^2}. 
\end{align*}
This yields
\begin{align*}
\egf{G}{\seq{A}}{\seq{p}}(t; \seq{x}; \seq{y}) &= \frac{x_1 - 1}{x_1 - 
\exp \left(y_1(x_1-1) \frac{-1-y_2+\exp(y_2(1-\sqrt{1-2u}))(1+y_2\sqrt{1-2t})}{y_2^2}\right)}.
\end{align*}
As in Section \ref{sec:height}, we can attempt to integrate these functions again to introduce new pattern conditions at level 2.

\ \\
{\bf Example 4.} $\seq{A} = (\{21\}, \{21\}, \ldots)$, $\seq{p} = \{\emptyset, \emptyset, \ldots\}$, $x_i = y_i = 1$ for all $i \geq 1$. \\

As one last example, suppose that we want to compute the number 
of Stirling permutations where there are no block descents at 
any level and we set $x_i = y_i =1$ for all $i$. This does not strictly fit into the format of ``ignoring higher blocks,'' but we can still accomplish our goal. We want to find $\egf{G}{\seq{A}}{\seq{p}}(t; \seq{1}; \seq{1})$, where  $\seq{1} =(1,1,\ldots )$. Then 
if $\egf{G}{\seq{A}}{\seq{p}}(t; \seq{1}; \seq{1}) = 
\sum_{n \geq 0} g_n \frac{t^n}{n!}$, Theorem \ref{thm:main} implies 
that 
\begin{align}\label{receq}
\egf{G}{\seq{A}}{\seq{p}}(t; \seq{1}; \seq{1}) &= 
F^{21,\emptyset}
\left(\int_0^t \egf{G}{\seq{A}}{\seq{p}}(u; \seq{1}; \seq{1})\mathrm{d}u,1\right) \nonumber \\ 
&= \exp \left(  \int_{0}^{t} \egf{G}{\seq{A}}{\seq{p}}(u; \seq{1}; \seq{1})\mathrm{d}u \right).
\end{align}
Since $g_0 =1$, it  is easy to see  that equation (\ref{receq}) completely determines the sequence $g_0, g_1, \ldots $.  In fact,
$\egf{G}{\seq{A}}{\seq{p}}(t; \seq{1}; \seq{1}) = \frac{1}{1-t}$ is 
a solution to this equation we must have $g_n = n!$ for all $n$. This is, of course, easy to see combinatorially. 
That is, if $\sg \in \Q_{n-1}$ has no block descents, then 
we can either insert the two copies of $n$ as a level 1 block at the right end of $\sg$ or immediately before the second occurrence of 
$i$ for any $i =1, \ldots , n-1$. 

\section{Labeled Trees}
\label{sec:trees}
In this section, we shall show that the results of the previous sections 
can be described in terms of patterns in trees. 

We define $\LT_{n}$ to be the set of planar, rooted, binary trees with $n+1$ leaves such that
\begin{enumerate}
\item each $i \in \{0, 1, 2, \ldots, n\}$ is used to label exactly one leaf, and
\item for any vertex, the smallest label used in its left subtree is less than the smallest label used in its right subtree. 
\end{enumerate}
For example
\begin{align}
\label{lt-example}
\RYYLY{0}{1}{3}{2}
\end{align}
is in $\LT_{3}$. However, the tree
\begin{align*}
\RYYLY{0}{2}{3}{1}
\end{align*}
is not in $\LT_{3}$ because it fails the second condition at the vertex whose right child is the leaf labeled 1. 

A \emph{left} (respectively \emph{right}) \emph{comb} is a tree in which every right (respectively left) child is a leaf. In other words, a left comb only grows to the left, and a right comb only grows to the right. Consider a tree $T$ that satisfies all of the properties necessary to be in $\mathcal{LT}_{n}$ except that its leaves are numbered bijectively with some other $n$-element subset of the integers. We say that the $\emph{reduced form}$ of $T$, written $\red(T)$, is the unique the $S$ in $\mathcal{LT}_{n}$ obtained by replacing the $i$th smallest label in $T$ with $i$ for each $i \in \{1, \ldots, n\}$.

As outlined in \cite{dotsenko}, we can recursively define a bijection
\begin{align*}
\Phi : \mathcal{LT}_{n+1} \rightarrow \Q_{n} 
\end{align*}
by thinking of a tree $T$ as a left comb with subtrees $T_{1}, \ldots, T_{k}$ as its set of right children. We then apply $\Phi$ to each subtree $T_{i}$ after reducing $T_{i}$. We let $\Phi$ take the one-node tree to the empty word, and then let
\begin{align*}
\Phi(T) = \left\{ \begin{array}{ll}
1 \Phi(T_{1}) 1 & \text{if } k=1 \\
\red(\Phi(T_{1}) \ldots \Phi(T_{k})) & \text{if } k>1
\end{array} \right.
\end{align*}
Intuitively, we begin at the root and perform a depth-first search, exploring left as far as possible before exploring right. Each time we descend to the right, we record the smallest leaf label in that right subtree. When we ascend an edge that prompted us to record an $i$ when we descended it initially, we record the $i$ a second time. For example, $\Phi$ maps 
\begin{align}
\label{phi}
\RYYLY{0}{1}{3}{2} \longmapsto 133221 .
\end{align}

This bijection allows us to map the notions of level and block patterns from $Q_n$ to $\LT_n$. In particular, the \emph{level} of a vertex in a tree $T \in \LT_n$ is equal to 1 greater than the number of right branches on the path from the root to the vertex. Occurrences of the block pattern $p$ of length $m$ in a tree $T \in \LT_n$ correspond to appearances of left combs with labels $p_1, \ldots, p_m$ inside the tree $T$. The underlines of $p$ tell us which left branches can be removed in order to obtain the left comb inside $T$.

For example, the tree
\begin{align*}
\LYYY{1}{4}{2}{3}
\end{align*}
has height 1. It has 3 blocks, all at level 1. Furthermore, it has two occurrences of the pattern $21$ at level 1, in the entries $4, 2$ and $4, 3$. It has only one occurrence of the descent pattern $\underline{21}$ at level 1, in the entries $4, 2$. This is because the leaves labeled 4 and 3 are not right children of consecutive nodes in the tree. 


When $p$ is a consecutive pattern, block patterns correspond exactly to the notion of consecutive patterns studied in \cite{dotsenko}. In that paper, the author computed the number of trees avoiding many small sets of consecutive tree patterns and proved a general asymptotic result for these consecutive patterns.

\section{$k$-Stirling Permutations}
\label{sec:k}

In this section, we shall prove an analog of Theorem \ref{thm:main} 
for $k$-Stirling permutations. This theorem will reduce to Theorem \ref{thm:main} when $k=2$.

For $\sg \in Q_{n,k}$, we let 
$[i,i]_\sg$ denote the consecutive segment of $\sg$ that lies 
between the first occurrence of $i$ in $\sg$ and the last occurrence 
of $i$ in $\sg$. We let $(i,i)_\sg$ be the word that results 
by removing all occurrence of $i$ from $[i,i]_\sg$. 
For $i \neq j$, we write $[i,i]_\sg \subset [j,j]_\sg$ 
if  $[i,i]_\sg$ is a consecutive subsequence of $[j,j]_\sg$.  
We  say that $[i,i]_{\sg}$ is a level 1 block if it is not contained in any $[j,j]_{\sg}$ for $j \neq i$. For $\ell \geq 2$, 
we define the level $\ell$ blocks of $\sg$ inductively by saying 
that $[i,i]_\sg$ is level $\ell$ block if there is a level 
$\ell-1$ block $[j,j]_\sg$ of $\sg$ such that $[i,i]_\sg \subset [j,j]_\sg$ 
and the reduced form $\red([i,i]_\sg)$ is a level 1 block in $\red((j,j)_\sg)$. If $[i,i]_{\sg} \subset [j,j]_\sg$, $[i,i]_{\sg}$ is level $k$ block, 
and $[j,j]_{\sg}$ is a level $k-1$ block, then we will say that 
$[j, j]_\sg$ the \emph{parent} of the block $[i,i]_{\sg}$.

Notice that, for any $k \geq 3$, the level $k$ blocks contained 
in a given level $k-1$ block $[h,h]_\sg$ in $\sg \in \Q_{n, k}$ 
are naturally partitioned 
into $k-1$ groups depending on which two consecutive occurrences of 
$h$ the blocks fall between. We say that the level $k$ block is of \emph{type} $s$ if it occurs between the $s$th and $s+1$st occurrences of $h$. Then 
we say that two blocks are \emph{siblings} if they are both level 1 blocks or if they share the same parent $[j,j]_{\sg}$ and are of the same type.

We let $p{(\ell, s)}(\sg)$ equal the number of occurrences of the block pattern $p$ at level $\ell$ and type $s$. Notice that, since level 1 blocks do no have a type, this definition only makes sense for $\ell \geq 2$. Now, instead of avoiding sequences of sets, we can avoid sequences of tuples of sets. We will write $\vseq{A}$ for a sequence $(A_1, A_2, \ldots)$ such that
\begin{itemize}
\item $A_1$ is a set of patterns
\item $A_{i}$ is a $(k-1)$-tuple of sets of patterns, which we will write as \\ $(A_{i, 1}, \ldots, A_{i, k-1})$. 
\end{itemize}
Similarly, $\vseq{p}$ indicates a sequence whose first entry is a pattern and whose other entries are $(k-1)$-tuples of patterns. We suppress $k$ from the notation. Then we can write $\av{\Q}{n, k}{\vseq{A}}$ for the set of $k$-Stirling permutations of order $n$ that avoid each pattern in $A_1$ at level 1 and each pattern in $A_{i, j}$ among the blocks at level $i$ and type $j$. By $\vseq{x}$ we denote the set of variables 
\begin{align*}
x_1, x_{2, 1}, \ldots, x_{2, k-1}, x_{3, 1}, \ldots, x_{3, k-1}, \ldots .
\end{align*}
Now our main generating function is
\begin{align*}
\egfk{G}{\vseq{A}}{\vseq{p}}{k}(t; \vseq{x}; \vseq{y}) &= 1 + \sum_{n \geq 1} \frac{t^{n}}{n!} \sum_{\sg \in \av{\Q}{n, k}{\vseq{A}}}  x_{1}^{p_{1}^{(1)}(\sg)} y_{1}^{\bl^{(1)}(\sg)} \prod_{i \geq 2} \prod_{j=1}^{k-1} x_{i, j}^{p_{i, j}^{(i, j)}(\sg)} y_{i, j}^{\bl^{(i, j)}(\sg)} 
\end{align*}
where $\bl^{(i, j)}(\sg)$ is the number of level $i$ blocks of type $j$ in $\sg$. 

Before we can state our theorem, we also need to refine the operator $\sh$. Namely, if 
$\seq{\tilde{s}}= (s_1,(s_{2,1}, \ldots, s_{2,k-1}), (s_{3,1}, \ldots, s_{3,k-1}), \ldots $, then by $\kshift{j}{A}$, we mean the new sequence of tuples
$$\kshift{j}{A} = s_{2, j}, (s_{3, 1}, \ldots, s_{3, k-1}), \ldots .$$
 We now state a more general form of Theorem \ref{thm:main}.

\begin{thm}
\label{thm:k}
$\egfk{G}{\vseq{A}}{\vseq{p}}{k}(t; \vseq{x}; \vseq{y})$ is equal to
\begin{align*}
\egf{F}{A_1}{p_1} \left( y_1 \int_{0}^{t}  \left (\prod_{j=1}^{k-1} \egfk{G}{\kshift{j}{A}}{\kshift{j}{A}}{k}(u; \kshift{j}{A}; \kshift{j}{A})\right) \mathrm{d}u, x_1 \right) 
\end{align*} 
\end{thm}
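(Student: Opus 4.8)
The plan is to mimic the proof of Theorem~\ref{thm:main} almost verbatim, adapting the ``building a Stirling permutation by levels'' construction to the $k$-Stirling setting. The key structural observation is that in a $k$-Stirling permutation $\sg \in \Q_{n,k}$, a level~$1$ block $[i,i]_\sg$ is determined by its minimal element $i$ together with the $k-1$ internal segments $(s\text{th occurrence of }i, (s+1)\text{st occurrence of }i)$ for $s = 1, \ldots, k-1$; each such segment, after reduction, is itself a $k$-Stirling permutation whose blocks are exactly the level~$2$ blocks of type $s$ inside $[i,i]_\sg$. So to construct an arbitrary $\sg \in \av{\Q^m}{n,k}{\vseq{A}}$ with exactly $m$ level~$1$ blocks, I would: (1)~partition $\{1, \ldots, n\}$ into $m$ nonempty sets $S_1, \ldots, S_m$; (2)~for each $S_i$, remove its minimal element, further partition the remaining $|S_i|-1$ elements into an \emph{ordered} $(k-1)$-tuple of (possibly empty) sets, build on each a reduced $k$-Stirling permutation avoiding the appropriate shifted pattern tuple $\kshift{j}{A}$, and then weave in $k$ copies of the minimal element of $S_i$ so that the $j$th gap holds the $j$th piece; (3)~concatenate these $m$ gadgets in the order dictated by a permutation $\pi \in \av{\S}{m}{A_1}$.

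From this bijective description the generating-function identity falls out exactly as before. First I would record the analogue of equation~(\ref{pf1}): the weighted sum over $\av{\Q^m}{n,k}{\vseq{A}}$ equals $y_1^m$ times the coefficient extraction $\left.\egfk{G}{\vseq{A}}{\vseq{p}}{k}\right|_{t^n y_1^m/n!}$. Then, expanding step~(2), each of the $m$ gadgets contributes a factor equal to the coefficient of $t^{a_i - 1}/(a_i-1)!$ in the \emph{product} $\prod_{j=1}^{k-1} \egfk{G}{\kshift{j}{A}}{\kshift{j}{A}}{k}$, since distributing $a_i - 1$ labels into an ordered $(k-1)$-tuple of $k$-Stirling permutations is exactly the product of the corresponding exponential generating functions (this is where the single $\egf{G}{\shift{A}}{\shift{p}}$ from Theorem~\ref{thm:main} gets replaced by a product of $k-1$ shifted generating functions, one per type). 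The multinomial coefficient $\binom{n}{a_1, \ldots, a_m}$ accounts for distributing labels among the $m$ gadgets, summing the $a_i$-pieces into an $m$th power turns the sum over compositions into $\left(\int_0^t \prod_{j=1}^{k-1} \egfk{G}{\kshift{j}{A}}{\kshift{j}{A}}{k}(u; \ldots)\,\mathrm{d}u\right)^m$ (the integral shifting $t^{a_i-1}/(a_i-1)!$ to $t^{a_i}/a_i!$), and finally summing over $m$ with the factor $y_1^m$ and weight $x_1^{p_1(\pi)}$ over $\pi \in \av{\S}{m}{A_1}$ reconstitutes $\egf{F}{A_1}{p_1}$ evaluated at $y_1$ times that integral, with second argument $x_1$. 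Assembling these pieces into $\egfk{G}{\vseq{A}}{\vseq{p}}{k} = 1 + \sum_{n \geq 1} (t^n/n!)(\ldots)$ gives the stated formula.

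The one genuinely new verification — and the step I expect to be the main obstacle to write cleanly rather than the main conceptual difficulty — is checking that the ``weave in $k$ copies of the minimal element'' operation in step~(2) is a bijection that correctly matches up the block/level/type statistics: namely that the level~$\ell$ blocks of type $s$ inside a gadget built from $S_i$ are in natural correspondence with the level~$(\ell-1)$ blocks of the $s$th component $k$-Stirling permutation (with type shifting appropriately), and that the underline (consecutivity) conditions are preserved under this weaving. One also has to confirm that a pattern $p_{i,j}$ is being counted against exactly the blocks of level~$i$ and type~$j$, which is what licenses replacing $\vseq{p}$-statistics by $\kshift{j}{A}$-statistics componentwise; here it is important that siblings in the $k$-Stirling sense are required to share both parent \emph{and} type, so that block patterns never straddle two different gaps of the same parent. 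Once this correspondence is stated precisely, the algebra is identical to the $k=2$ case, and setting $k=2$ collapses the product $\prod_{j=1}^{k-1}$ to a single factor, recovering Theorem~\ref{thm:main}.
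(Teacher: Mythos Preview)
Your proposal is correct and follows essentially the same argument as the paper: both proofs proceed by the ``building a $k$-Stirling permutation by levels'' construction (partition into $m$ nonempty sets, further split each set minus its minimum into an ordered $(k-1)$-tuple, build a shifted $k$-Stirling permutation on each piece, interleave with $k$ copies of the minimum, then concatenate according to a permutation in $\av{\S}{m}{A_1}$), and then translate this bijection into the product-of-$(k-1)$-factors identity via the same multinomial/integral manipulations. If anything, your write-up is more explicit than the paper's about the verification that the weaving bijection respects the level/type statistics and the sibling condition.
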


\begin{proof}
We set $\av{\Q^{m}}{n, k}{\vseq{A}}$ to be the set of all $\sg \in \av{\Q}{n, k}{\vseq{A}}$ with $m$ level 1 blocks. We describe a 
way in which we can uniquely construct all $k$-Stirling permutations 
$\sg \in \av{\Q^{m}}{n, k}{\vseq{A}}$. We will refer to this process as building a 
$k$-Stirling permutation by levels. 
\begin{enumerate}
\item First, we partition the set $\{1, 2, \ldots , n\}$ into an (unordered) collection of 
$m$ nonempty sets $\{S_1, S_2, \ldots, S_m\}$. Let $a_i$ be the minimal element 
of $S_i$ for $i=1, \ldots m$. 
\item  For each $i = 1 $ to $m$, we further partition 
each $S_i-\{a_i\}$ into a $k-1$ tuple of sets $(T_{i,1}, \ldots, T_{i,k-1})$, some of which may be empty 
.  Then we create a $k$-Stirling permutation 
in $Q_{|T_{i,j}|}(\kshift{j}{A})$ out of the elements of $T_{i,j}$. 
That is, the reduced form of this Stirling permutation should be a member of 
 $Q_{|T_{i,j}|}(\kshift{j}{A})$. Call this permutation 
$\sg^{(i,j)}$. If $T_{i,j}$ is empty, 
then $\sg^{(i,j)}$ is the empty permutation. We let 
$$\sg^{(i)} = a_i \sg^{(i,1)} a_i \sg^{(i,2)} a_i \ldots a_i \sg^{(i,k-1)} 
m_i.$$ 
\item We set the final $k$-Stirling permutation to be the concatenation \\
$\sg^{(\tau_1)} \ldots \sg^{(\tau_m)}$ where $\tau = \tau_1 \ldots \tau_m$ 
is an element of $\S_m$ which avoids $A_1$. 
\end{enumerate}

This process yields
\begin{align*}
&\sum_{\sg \in \av{\Q^{m}}{n, k}{\vseq{A}}} x_{1}^{p_{1}^{(1)}(\sg)} y_{1}^{\bl^{(1)}(\sg)} \prod_{i \geq 2} \prod_{j=1}^{k-1} x_{i, j}^{p_{i, j}^{(i, j)}(\sg)} y_{i, j}^{\bl^{(i, j)}(\sg)}\\
= &\sum_{\tau \in \av{\Q}{m}{A_1}} x_1^{p_1(\tau)} y_1^m \sum_{\overset{a_1+ \cdots + a_r =n}{a_i \geq 1}} 
\binom{n}{a_1, \ldots, a_r} \times \\ 
&\prod_{i=1}^{k-1} \sum_{\overset{b_{i,1}+ \cdots + b_{i,k-1}=a_i-1}{b_{i,j} \geq 0}} \binom{a_i-1}{b_{i,1}, \ldots, b_{i,k-1}} \times \\
&\prod_{j =1}^{k-1}  
\left(  \left. \egf{G}{\kshift{j}{A}}{\kshift{j}{p}}(t; \kshift{j}{x}; \kshift{j}{y}) \right|_{\frac{t^{b_{i,j}}}{b_{i,j}!}}\right)\\
= &\sum_{\tau \in \av{\Q}{m}{A_1}} x_1^{p_1(\tau)} y_1^m \sum_{\overset{a_1+ \cdots + a_r =n}{a_i \geq 1}}  \binom{n}{a_1, \ldots, a_r} \times \\
&\left. \left(\prod_{j =1}^{k-1}  
\left(  \egf{G}{\kshift{j}{A}}{\kshift{j}{p}}(t; \kshift{j}{x}; \kshift{j}{y})
\right)\right)\right|_{\frac{t^{a_i-1}}{(a_i-1)!}}\\
= &\sum_{\tau \in \av{\Q}{m}{A_1}} x_1^{p_1(\tau)} y_1^m \sum_{\overset{a_1+ \cdots + a_r =n}{a_i \geq 1}}  \binom{n}{a_1, \ldots, a_r} \times \\
&\int_0^t \left. \left(\prod_{j =1}^{k-1}  
\left(  \egf{G}{\kshift{j}{A}}{\kshift{j}{p}}(u; \kshift{j}{x}; \kshift{j}{y})
\right)\right)\right|_{\frac{t^{a_i}}{a_i!}} \\
=&\sum_{\tau \in \av{\Q}{m}{A_1}} x_1^{p_1(\tau)} y_1^m \left. \left(\int_0^t \left(\prod_{j =1}^{k-1}  
\left(  \egf{G}{\kshift{j}{A}}{\kshift{j}{p}}(u; \kshift{j}{x}; \kshift{j}{y})
\right)\right)\right)^m \right|_{\frac{t^{n}}{n!}}.
\end{align*}
Thus 
\begin{align*}
&\sum_{\sg \in \av{\Q}{n}{\vseq{A}}} x_{1}^{p_{1}^{(1)}(\sg)} y_{1}^{\bl^{(1)}(\sg)} \prod_{i \geq 2} \prod_{j=1}^{k-1} x_{i, j}^{p_{i, j}^{(i, j)}(\sg)} y_{i, j}^{\bl^{(i, j)}(\sg)}\\
= &\sum_{m=1}^n \sum_{\sg \in \av{\Q^{m}}{n, k}{\vseq{A}}} x_{1}^{p_{1}^{(1)}(\sg)} y_{1}^{\bl^{(1)}(\sg)} \prod_{i \geq 2} \prod_{j=1}^{k-1} x_{i, j}^{p_{i, j}^{(i, j)}(\sg)} y_{i, j}^{\bl^{(i, j)}(\sg)}
\\
= &\sum_{m=1}^n \sum_{\tau \in \av{\Q}{m}{A_1}} x_1^{p_1(\tau)} y_1^m \times \\
&\left. \left(\int_0^t \left(\prod_{j =1}^{k-1}  
\left(  \egf{G}{\kshift{j}{A}}{\kshift{j}{p}}(u; \kshift{j}{x}; \kshift{j}{y})
\right)\right)\right)^m \right|_{\frac{t^{n}}{n!}} \\
= & \left. \egf{F}{A_1}{p_1} \left(y_1 \left(\int_0^t \left(\prod_{j =1}^{k-1}  
\left(  \egf{G}{\kshift{j}{A}}{\kshift{j}{p}}(u; \kshift{j}{x}; \kshift{j}{y})
\right)\right)\right),x_1\right) \right|_{\frac{t^n}{n!}}.
\end{align*}  
Finally, we see that $\egf{G}{\seq{A}}{\seq{p}}(t; \seq{x}; \seq{y})$ equals
\begin{align*}
&= 1+ \sum_{n \geq 1} \frac{t^n}{n!} \sum_{\sg \in \av{\Q}{n}{\vseq{A}}} x_{1}^{p_{1}^{(1)}(\sg)} y_{1}^{\bl^{(1)}(\sg)} \prod_{i \geq 2} \prod_{j=1}^{k-1} x_{i, j}^{p_{i, j}^{(i, j)}(\sg)} y_{i, j}^{\bl^{(i, j)}(\sg)} \\
&= \egf{F}{A_1}{p_1} \left(y_1 \left(\int_0^t \left(\prod_{j =1}^{k-1}  
\left(  \egf{G}{\kshift{j}{A}}{\kshift{j}{p}}(u; \kshift{j}{x}; \kshift{j}{y})
\right)\right)\right),x_1\right)
\end{align*}
which proves Theorem \ref{thm:k}.
\end{proof}

As in the $k=2$ case, this theorem gives us a recursion for $n!$ times the coefficient of $t^n$ in $\egfk{G}{\seq{A}}{\seq{p}}{k}(t; \seq{x}; \seq{y})$. Since this recursion is rather unwieldy, we will not record it here.

Although it is difficult to find applications that use the full generality of Theorem \ref{thm:k} in which the integral is computable, we can compute the integral in some simple cases. For example, if we set $A_{3, j} = \{1\}$ for each $j$ then we obtain Stirling permutations of height at most 2. With no additional restrictions or enumeration at levels 2 or greater, we can set $k=2$ and compute the integral
\begin{align*}
\int_{0}^{t} \frac{\mathrm{d}u}{(1 - uy_{2, 1})(1 - uy_{2, 2})} &= \frac{1}{y_{2, 1} - y_{2, 2}} \log \left( \frac{ty_{2, 2} -1}{ty_{2, 1}} \right) .
\end{align*}
We could then plug this function into the $F$ corresponding to $A_1$ and $p_1$, as indicated in Theorem \ref{thm:k}. This integral remains computable for slightly larger values of $k$, although the result gets more and more complicated. An easier case is when $A_{3, j} = \{1\}$ and $A_{2, j} = \{21\}$ for all $j$, since the resulting integrand is 
\begin{align*}
\exp \left( \sum_{j=1}^{k-1} uy_{2, j} \right) .
\end{align*}

If we are willing to set some of the variables equal, we can obtain more closed-form generating functions. For example, if we set $A_{3, j} = \{1\}$, $p_{2, j} = \underline{21}$, $x_{2, j} = x_{2}$, and $y_{2, j} = y_{2}$ for all $j$, then our goal is to compute the integral
\begin{align*}
\int_{0}^{t} \left( \frac{1-x_2}{\exp \left( uy_{2}(x_{2}-1) \right) - x_{2} } \right)^{k} \mathrm{d}u
\end{align*}
which can be done for small values of $k$.

Finally, we can construct many examples where we restrict the 
possible size of blocks of various types. For example, 
we know that 
$$\cosh(y_{2,1}t) = \sum_{n \geq 0} \frac{y_{2,1}^{2n}t^{2n}n}{(2n)!}$$ is 
the generating function of even length permutations and 
$$\sinh(y_{2,2}t) = \sum_{n \geq 0} \frac{y_{2,1}^{2n+1}nt^{2n+1}}{(2n+1)n!}$$ 
is odd length permutations.
Then 
\begin{multline*}
\int_0^t \cosh(y_{2,1}u)\sinh(y_{2,2}u)\mathrm{d}u = \\
\frac{y_{2,2}-y_{2,2}\cosh(y_{2,1}t)\cosh(y_{2,2}t)+y_{2,1}\sinh(y_{2,1}t)
\sinh(y_{2,2}t)}{y_{2,1}^2 -y_{2,2}^2}.
\end{multline*}
By Theorem \ref{thm:k}, the generating function of
$y_1^{\bl_1(\sg)} y_{2,1}^{\bl_{2,1}(\sg)}y_{2,2}^{\bl_{2,2}(\sg)}$ 
over all $3$-Stirling permutation of height $\leq 2$ such 
that for any level one block $[j,j]$ of $\sg$,  its  
type one subblock of level 2 has even length and its type 2 level subblock 
is of odd length equals
$$\frac{1}{1 -y_1 \left( \frac{y_{2,2}-y_{2,2}\cosh(y_{2,1}t)\cosh(y_{2,2}t)+y_{2,1}\sinh(y_{2,1}t)
\sinh(y_{2,2}t)}{y_{2,1}^2 -y_{2,2}^2}\right)}.$$


\bibliographystyle{apalike}
\bibliography{abstract}

\end{document}